\newtheorem{theorem}{Theorem}
\theoremstyle{plain}
\newtheorem{definition}{Definition}
\newtheorem{lemma}{Lemma}
\newtheorem{remark}{Remark}
\numberwithin{equation}{section}
\begin{document}
\title[Ostrowski type inequalities for s-convex functions via fractional
integrals]{New inequalities of Ostrowski type for mappings whose derivatives
are $s-$convex in the second sense via fractional integrals}
\author{Erhan SET}
\address{Department of Mathematics, \ Faculty of Science and Arts, D\"{u}zce
University, D\"{u}zce-TURKEY}
\email{erhanset@yahoo.com}
\subjclass[2000]{ 26A33, 26A51, 26D07, 26D10, 26D15.}
\keywords{Ostrowski type inequality, $s-$convex function, ,
Riemann-Liouville fractional integral.}

\begin{abstract}
New identity similar to an identity of \cite{ADDC} for fractional integrals
have been defined. Then making use of this identity, some new Ostrowski type
inequalities for Riemann-Liouville fractional integral have been developed.
Our results have some relationships with the results of Alomari et. al.,
proved in \cite{ADDC} [published in. Appl. Math. Lett. 23 (2010) 1071-1076]
and the analysis used in the proofs is simple.
\end{abstract}

\maketitle

\section{Introduction and Preliminary Results}

In 1938, A.M. Ostrowski proved the following interesting and useful integral
inequality (\cite{OST}, see also \cite[page 468]{Mitrinovic1}):

\begin{theorem}
\label{t1} Let $f:I\rightarrow 
\mathbb{R}
,$ where $I\subseteq 
\mathbb{R}
$ is an interval, be a mapping differentiable in the interior $I^{\circ }$
of $I$, and let $a,b\in I^{\circ }$ with $a<b$. If $\left\vert f^{\prime
}\left( x\right) \right\vert \leq M$ for all $x\in \left[ a,b\right] $, then
the following inequality holds:
\end{theorem}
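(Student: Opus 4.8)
The plan is to reduce the claimed inequality to the classical Montgomery identity and then estimate crudely, exactly as in the original argument of Ostrowski. First I would introduce the Peano kernel
\[
p(x,t)=\begin{cases} t-a, & a\le t\le x,\\[2pt] t-b, & x<t\le b,\end{cases}
\]
and establish the identity
\[
f(x)-\frac{1}{b-a}\int_a^b f(t)\,dt=\frac{1}{b-a}\int_a^b p(x,t)\,f'(t)\,dt .
\]
This is obtained by splitting the right-hand integral at $t=x$ and integrating by parts on each piece: on $[a,x]$ one gets $\int_a^x (t-a)f'(t)\,dt=(x-a)f(x)-\int_a^x f(t)\,dt$, and on $[x,b]$ one gets $\int_x^b (t-b)f'(t)\,dt=(b-x)f(x)-\int_x^b f(t)\,dt$; adding these and dividing by $b-a$ produces the identity, the boundary terms combining into $(b-a)f(x)$.

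Second, I would take absolute values in this identity and apply the triangle inequality for integrals together with the hypothesis $\left|f'(t)\right|\le M$ for $t\in[a,b]$, which yields
\[
\left|f(x)-\frac{1}{b-a}\int_a^b f(t)\,dt\right|\le \frac{M}{b-a}\int_a^b \left|p(x,t)\right|\,dt .
\]
Since $\left|p(x,t)\right|=t-a$ on $[a,x]$ and $\left|p(x,t)\right|=b-t$ on $[x,b]$, a direct computation gives $\int_a^b \left|p(x,t)\right|\,dt=\frac{(x-a)^2}{2}+\frac{(b-x)^2}{2}$.

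Finally I would rewrite this kernel integral in the form appearing in the statement. Using $(x-a)+(b-x)=b-a$ and $(x-a)-(b-x)=2\bigl(x-\frac{a+b}{2}\bigr)$, a completion of the square gives $(x-a)^2+(b-x)^2=\frac{(b-a)^2}{2}+2\bigl(x-\frac{a+b}{2}\bigr)^2$, whence
\[
\frac{M}{b-a}\cdot\frac{(x-a)^2+(b-x)^2}{2}=M(b-a)\left[\frac14+\frac{\bigl(x-\frac{a+b}{2}\bigr)^2}{(b-a)^2}\right],
\]
which is precisely the asserted bound. There is no genuine obstacle in this argument: the one point requiring care is setting up the Montgomery identity correctly — in particular using $t-b$ rather than $t-a$ on the second subinterval, which is exactly what makes the boundary terms telescope into $f(x)$ — after which everything reduces to a single integration by parts and an elementary algebraic rearrangement.
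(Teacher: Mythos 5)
Your argument is correct: the Montgomery identity is set up properly, the kernel integral evaluates to $\frac{(x-a)^{2}+(b-x)^{2}}{2}$, and the algebraic identity $(x-a)^{2}+(b-x)^{2}=\frac{(b-a)^{2}}{2}+2\left(x-\frac{a+b}{2}\right)^{2}$ turns this into exactly the stated bound. The paper itself gives no proof of this theorem (it is quoted from Ostrowski's 1938 paper), so there is nothing to compare against; yours is the standard Peano-kernel proof. The only point you leave untouched is the closing claim that the constant $\frac{1}{4}$ is best possible, which the displayed statement also asserts; this is settled by taking, e.g., $x=a$ and $f(t)=M(t-a)$, for which both sides equal $\frac{M(b-a)}{2}$.
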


\begin{equation}
\left\vert f(x)-\frac{1}{b-a}\int_{a}^{b}f(t)dt\right\vert \leq M\left(
b-a\right) \left[ \frac{1}{4}+\frac{\left( x-\frac{a+b}{2}\right) ^{2}}{%
\left( b-a\right) ^{2}}\right]   \label{e1}
\end{equation}%
for all $x\in \left[ a,b\right] .$ The constant $\frac{1}{4}$ is the best
possible in the sense that it cannot be replaced by a smaller one.

This inequality gives an upper bound for the approximation of the integral
average $\frac{1}{(b-a)}\int_{a}^{b}f(t)dt$ by the value $f(x)$ at point $%
x\in \left[ a,b\right] .$ In recent years, such inequalities were studied
extensively by many researchers and numerious generalizations, extensions
and variants of them appeared in a number of papers see (\cite{D1}-\cite%
{ADDC})

In \cite{hudzik}, the class of functions which are $s-$convex in the second
sense has been introduced by Hudzik and Maligranda as the following:

\begin{definition}
\label{d1} A function $f:[0,\infty )\mathbb{\rightarrow R}$ is said to be $%
s- $convex in the second sense if 
\begin{equation*}
f(\lambda x+(1-\lambda )y)\leq \lambda ^{s}f(x)+(1-\lambda )^{s}f(y)
\end{equation*}%
for all $x,y\in \lbrack 0,\infty )$, $\lambda \in \lbrack 0,1]$ and for some
fixed $s\in (0,1].$ This class of $s$-convex functions is usually denoted by 
$K_{s}^{2}.$
\end{definition}

It can be easily seen that for $s=1,$ $s-$convexity reduces to ordinary
convexity of functions defined on $[0,\infty ).$

In \cite{dragomir3}, S.S. Dragomir and S. Fitzpatrick proved a variant of
Hadamard's inequality which holds for $s-$convex functions in the second
sense:

\begin{theorem}
\label{t2} Suppose that $f:[0,\infty )\rightarrow \lbrack 0,\infty )$ is an $%
s$-convex function in the second sense, where $s\in (0,1),$ and let $a,b\in
\lbrack 0,\infty ),$ $a<b.$ If $f^{\prime }\in L^{1}(\left[ a,b\right] ),$
then the following inequalities hold:%
\begin{equation}
2^{s-1}f(\frac{a+b}{2})\leq \frac{1}{b-a}\int\limits_{a}^{b}f(x)dx\leq \frac{%
f(a)+f(b)}{s+1}.  \label{e.1.3}
\end{equation}
\end{theorem}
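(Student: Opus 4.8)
The plan is to reduce both inequalities to elementary one-variable estimates by means of the affine substitution $x=ta+(1-t)b$ with $t\in[0,1]$, which (since $dx=-(b-a)\,dt$) turns the normalized integral into
\[
\frac{1}{b-a}\int_a^b f(x)\,dx=\int_0^1 f\big(ta+(1-t)b\big)\,dt .
\]
The hypothesis $f'\in L^1([a,b])$ guarantees that $f$ is absolutely continuous, hence integrable, so this change of variable and the subsequent interchange of integration with pointwise inequalities are legitimate.

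For the right-hand inequality I would apply Definition \ref{d1} directly to the integrand: for each $t\in[0,1]$, $f\big(ta+(1-t)b\big)\le t^{s}f(a)+(1-t)^{s}f(b)$. Integrating both sides over $[0,1]$ and using $\int_0^1 t^{s}\,dt=\int_0^1 (1-t)^{s}\,dt=\frac{1}{s+1}$ yields $\frac{1}{b-a}\int_a^b f(x)\,dx\le\frac{f(a)+f(b)}{s+1}$.

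For the left-hand inequality I would exploit the midpoint structure: writing $\frac{a+b}{2}=\tfrac12\big(ta+(1-t)b\big)+\tfrac12\big((1-t)a+tb\big)$ and applying Definition \ref{d1} with $\lambda=\tfrac12$ gives $f\big(\tfrac{a+b}{2}\big)\le 2^{-s}\big[f(ta+(1-t)b)+f((1-t)a+tb)\big]$ for every $t$. Integrating over $[0,1]$ and observing that, via the substitution $t\mapsto 1-t$, both integrals on the right equal $\frac{1}{b-a}\int_a^b f(x)\,dx$, one obtains $2^{s}f\big(\tfrac{a+b}{2}\big)\le \frac{2}{b-a}\int_a^b f(x)\,dx$, which after dividing by $2$ is exactly $2^{s-1}f\big(\tfrac{a+b}{2}\big)\le \frac{1}{b-a}\int_a^b f(x)\,dx$.

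The argument is essentially routine and parallels the classical Hermite--Hadamard proof; there is no serious obstacle. The only point that needs a little care is the bookkeeping in the left-hand estimate — selecting the two symmetric affine combinations of $a$ and $b$ whose average is the midpoint, and recognizing that their integrals over $[0,1]$ coincide by a reflection of the variable. I note in passing that the nonnegativity assumption $f:[0,\infty)\to[0,\infty)$ in the statement is not actually used in either estimate; only $s$-convexity and integrability are needed.
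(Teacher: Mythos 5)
The paper does not prove this theorem: it is quoted verbatim (as Theorem \ref{t2}) from Dragomir and Fitzpatrick \cite{dragomir3}, so there is no in-paper argument to compare against. Your proof is correct and is in fact the standard argument from that reference: the right-hand bound by integrating the defining inequality $f(ta+(1-t)b)\le t^{s}f(a)+(1-t)^{s}f(b)$ over $t\in[0,1]$, and the left-hand bound by applying $s$-convexity with $\lambda=\tfrac12$ to the symmetric pair $ta+(1-t)b$ and $(1-t)a+tb$ and integrating; the bookkeeping with the factor $2^{s-1}$ checks out. One small caveat: your claim that $f'\in L^{1}([a,b])$ forces $f$ to be absolutely continuous is not true as stated (the Cantor function is a counterexample); what the argument actually requires is only $f\in L^{1}([a,b])$, which is the hypothesis in the original source --- the ``$f'$'' in the statement here appears to be a typo. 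Your side remark that nonnegativity of $f$ is never used is also fine (indeed it is automatic for $s$-convex functions in the second sense).
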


The constant $k=\frac{1}{s+1}$ is the best possible in the second inequality
in (\ref{e.1.3}).

The following identity is proved by Alomari et.al. (see \cite{ADDC})

\begin{lemma}
\label{l1} Let $f:I\subset \mathbb{R\rightarrow R}$ be a differentiable
mapping on $I^{\circ }$ where $a,b\in I$ with $a<b$. If $f^{\prime }\in L%
\left[ a,b\right] $, then we have the equality: 
\begin{eqnarray*}
f(x)-\frac{1}{b-a}\int_{a}^{b}f(t)dt &=&\frac{\left( x-a\right) ^{2}}{b-a}%
\int_{0}^{1}tf^{\prime }(tx+(1-t)a)dt \\
&&-\frac{\left( b-x\right) ^{2}}{b-a}\int_{0}^{1}tf^{\prime }(tx+(1-t)b)dt
\end{eqnarray*}%
for each $x\in \left[ a,b\right] .$
\end{lemma}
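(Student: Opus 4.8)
The plan is to prove the identity by starting from the right-hand side and evaluating each of the two parametric integrals by a single integration by parts, after which the boundary terms and the remaining integrals will reassemble into the left-hand side. No convexity or $s$-convexity is used here; only the hypothesis $f'\in L[a,b]$ is needed, so that along each segment the map $t\mapsto f(tx+(1-t)a)$ is absolutely continuous and integration by parts is legitimate.

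First I would treat $I_{1}:=\int_{0}^{1}tf^{\prime }(tx+(1-t)a)\,dt$. Setting $g(t)=f(tx+(1-t)a)$, so that $g^{\prime }(t)=(x-a)f^{\prime }(tx+(1-t)a)$, one gets $I_{1}=\frac{1}{x-a}\int_{0}^{1}tg^{\prime }(t)\,dt$ (for $x\neq a$; the degenerate case $x=a$ is trivial since then the first term on the right of the lemma vanishes and the identity reduces to an obvious one). Integration by parts with $u=t$, $dv=g^{\prime }(t)\,dt$ gives $\int_{0}^{1}tg^{\prime }(t)\,dt=g(1)-\int_{0}^{1}g(t)\,dt=f(x)-\int_{0}^{1}f(tx+(1-t)a)\,dt$, and the substitution $y=tx+(1-t)a$, $dy=(x-a)\,dt$, turns the last integral into $\frac{1}{x-a}\int_{a}^{x}f(y)\,dy$. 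Hence
\[
I_{1}=\frac{f(x)}{x-a}-\frac{1}{(x-a)^{2}}\int_{a}^{x}f(y)\,dy,\qquad
\frac{(x-a)^{2}}{b-a}I_{1}=\frac{(x-a)f(x)}{b-a}-\frac{1}{b-a}\int_{a}^{x}f(y)\,dy.
\]

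The integral $I_{2}:=\int_{0}^{1}tf^{\prime }(tx+(1-t)b)\,dt$ is handled in exactly the same way, with $b$ replacing $a$, giving $I_{2}=\frac{f(x)}{x-b}-\frac{1}{(x-b)^{2}}\int_{b}^{x}f(y)\,dy$. The only point requiring genuine care is the sign bookkeeping in the corresponding coefficient: using $(x-b)^{2}=(b-x)^{2}$ and $\frac{(b-x)^{2}}{x-b}=-(b-x)$, together with $\int_{b}^{x}f=-\int_{x}^{b}f$, one finds $-\frac{(b-x)^{2}}{b-a}I_{2}=\frac{(b-x)f(x)}{b-a}-\frac{1}{b-a}\int_{x}^{b}f(y)\,dy$. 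Adding this to the expression for $\frac{(x-a)^{2}}{b-a}I_{1}$, the coefficients of $f(x)$ sum to $\frac{(x-a)+(b-x)}{b-a}=1$, and the two integrals combine by additivity of the integral into $\int_{a}^{x}f+\int_{x}^{b}f=\int_{a}^{b}f$, which yields precisely $f(x)-\frac{1}{b-a}\int_{a}^{b}f(t)\,dt$, completing the proof.

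The computation is entirely elementary, so the ``hard part'' is merely vigilance: the main obstacle is the sign and orientation tracking in the second term (simplifying $\frac{(b-x)^{2}}{x-b}$ to $-(b-x)$ and correctly orienting $\int_{b}^{x}$ versus $\int_{x}^{b}$), which is exactly where an error would otherwise slip in. A minor secondary remark is that the endpoints $x=a$ and $x=b$ cause no trouble, since in each of those cases one of the two boundary-squared coefficients on the right-hand side is zero.
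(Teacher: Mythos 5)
Your proof is correct, and it is essentially the argument the paper relies on: the paper states this lemma without proof (quoting it from Alomari et al.), but its own Lemma \ref{L1} --- the fractional generalization --- is proved by exactly the same integration by parts followed by the substitution $y=tx+(1-t)a$ (resp.\ $y=tx+(1-t)b$), of which your computation is the case $\alpha =1$. Your sign and orientation bookkeeping in the second term is handled correctly, so nothing is missing.
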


Using the Lemma \ref{l1}, Alomari et al. in \cite{ADDC} established the
following results which holds for s-convex functions in the second sense.

\begin{theorem}
\label{t3} Let $f:I\subset \lbrack 0,\infty )\mathbb{\rightarrow R}$ be a
differentiable mapping on $I^{\circ }$ such that $f^{\prime }\in L\left[ a,b%
\right] ,$ where $a,b\in I$ with $a<b.$ If $\left\vert f^{\prime
}\right\vert $ is s-convex in the second sense on $[a,b]$ for some fixed $%
s\in (0,1]$ and $\ \left\vert f^{\prime }(x)\right\vert \leq M,$ $x\in \left[
a,b\right] $, then we have the inequality: 
\begin{equation}
\left\vert f(x)-\frac{1}{b-a}\int_{a}^{b}f(t)dt\right\vert \leq \frac{M}{b-a}%
\left[ \frac{\left( x-a\right) ^{2}+\left( b-x\right) ^{2}}{s+1}\right] ,
\label{e.1.4}
\end{equation}%
for each $x\in \left[ a,b\right] .$
\end{theorem}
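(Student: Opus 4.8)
The plan is to start from the integral identity in Lemma \ref{l1}, take absolute values, and then estimate each of the two resulting integrals using the $s$-convexity hypothesis together with the uniform bound $\left\vert f^{\prime}\right\vert\leq M$. Concretely, from
\[
f(x)-\frac{1}{b-a}\int_{a}^{b}f(t)\,dt=\frac{(x-a)^{2}}{b-a}\int_{0}^{1}tf^{\prime}(tx+(1-t)a)\,dt-\frac{(b-x)^{2}}{b-a}\int_{0}^{1}tf^{\prime}(tx+(1-t)b)\,dt,
\]
the triangle inequality gives an upper bound which is a sum of two terms of the form $\frac{(x-a)^{2}}{b-a}\int_{0}^{1}t\,\bigl|f^{\prime}(tx+(1-t)a)\bigr|\,dt$ and analogously with $b$ in place of $a$.

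Next I would invoke the $s$-convexity of $\left\vert f^{\prime}\right\vert$ in the second sense: writing the argument $tx+(1-t)a$ as a convex combination with parameter $t$, Definition \ref{d1} yields
\[
\bigl|f^{\prime}(tx+(1-t)a)\bigr|\leq t^{s}\bigl|f^{\prime}(x)\bigr|+(1-t)^{s}\bigl|f^{\prime}(a)\bigr|\leq M\bigl(t^{s}+(1-t)^{s}\bigr),
\]
using $\left\vert f^{\prime}\right\vert\leq M$ at the endpoints, and similarly for the $b$-term. This reduces everything to the single scalar integral $\int_{0}^{1}t\bigl(t^{s}+(1-t)^{s}\bigr)\,dt$.

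The one genuine computation — and the only place any care is needed — is evaluating $\int_{0}^{1}t\bigl(t^{s}+(1-t)^{s}\bigr)\,dt=\int_{0}^{1}t^{s+1}\,dt+\int_{0}^{1}t(1-t)^{s}\,dt=\frac{1}{s+2}+\frac{1}{(s+1)(s+2)}=\frac{1}{s+1}$, where the middle integral is the Beta integral $B(2,s+1)$. Plugging this value back in, each of the two terms contributes $\frac{M}{b-a}\cdot\frac{(x-a)^{2}}{s+1}$ and $\frac{M}{b-a}\cdot\frac{(b-x)^{2}}{s+1}$ respectively, and adding them gives exactly \eqref{e.1.4}. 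I do not anticipate any real obstacle here: the argument is a direct application of Lemma \ref{l1}, the $s$-convexity estimate, and the elementary Beta-function evaluation.
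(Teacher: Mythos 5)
Your proof is correct: the identity of Lemma \ref{l1}, the triangle inequality, the $s$-convexity bound $|f^{\prime}(tx+(1-t)a)|\leq t^{s}|f^{\prime}(x)|+(1-t)^{s}|f^{\prime}(a)|\leq M(t^{s}+(1-t)^{s})$, and the evaluation $\int_{0}^{1}t^{s+1}\,dt+\int_{0}^{1}t(1-t)^{s}\,dt=\frac{1}{s+2}+\frac{1}{(s+1)(s+2)}=\frac{1}{s+1}$ are all accurate and yield exactly \eqref{e.1.4}. This is essentially the same argument the paper uses: the paper recovers this statement as the $\alpha=1$ case of Theorem \ref{T1} (see Remark \ref{R1}), whose proof is precisely your computation carried out with $t^{\alpha}$ in place of $t$ and the Beta integral $\frac{\Gamma(\alpha+1)\Gamma(s+1)}{\Gamma(\alpha+s+2)}$ in place of $\frac{1}{(s+1)(s+2)}$.
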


\begin{theorem}
\label{t4} Let $f:I\subset \lbrack 0,\infty )\mathbb{\rightarrow R}$ be a
differentiable mapping on $I^{\circ }$ such that $f^{\prime }\in L\left[ a,b%
\right] ,$ where $a,b\in I$ with $a<b.$ If $\left\vert f^{\prime
}\right\vert ^{q}$ is s-convex in the second sense on $[a,b]$ for some fixed 
$s\in (0,1],$ $q>1,$ $p=\frac{q}{q-1}$ and $\ \left\vert f^{\prime
}(x)\right\vert \leq M,$ $x\in \left[ a,b\right] $, then we have the
inequality: 
\begin{equation}
\left\vert f(x)-\frac{1}{b-a}\int_{a}^{b}f(t)dt\right\vert \leq \frac{M}{b-a}%
\left[ \frac{\left( x-a\right) ^{2}+\left( b-x\right) ^{2}}{s+1}\right] ,
\label{1.4.5}
\end{equation}%
for each $x\in \left[ a,b\right] .$
\end{theorem}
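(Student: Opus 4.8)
The plan is to run the same argument that proves Theorem \ref{t3}, but to convert the pointwise bound on $|f'|$ that comes from the $s$-convexity of $|f'|^{q}$ back to the scale of $|f'|$ by a single extra comparison, so that the constant $\frac{1}{s+1}$ survives unchanged. Starting from Lemma \ref{l1} and the triangle inequality,
\[
\left\vert f(x)-\frac{1}{b-a}\int_a^b f(t)\,dt\right\vert \le \frac{(x-a)^2}{b-a}\,I_1 + \frac{(b-x)^2}{b-a}\,I_2,
\]
where $I_1=\int_0^1 t\,|f'(tx+(1-t)a)|\,dt$ and $I_2=\int_0^1 t\,|f'(tx+(1-t)b)|\,dt$, the task reduces to showing $I_1,I_2\le \frac{M}{s+1}$.

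Since $|f'|^{q}$ is $s$-convex and $|f'|\le M$, for the first integral I would write $|f'(tx+(1-t)a)|^{q}\le t^{s}|f'(x)|^{q}+(1-t)^{s}|f'(a)|^{q}\le M^{q}\big(t^{s}+(1-t)^{s}\big)$, and hence, taking $q$-th roots, $|f'(tx+(1-t)a)|\le M\big(t^{s}+(1-t)^{s}\big)^{1/q}$; the same bound holds for the $b$-integral. It then remains to estimate $\int_0^1 t\big(t^{s}+(1-t)^{s}\big)^{1/q}\,dt$.

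The main obstacle — and the one genuinely load-bearing step — is the comparison $\big(t^{s}+(1-t)^{s}\big)^{1/q}\le t^{s}+(1-t)^{s}$. I would justify it by the elementary fact that $t^{s}+(1-t)^{s}\ge 1$ for all $t\in[0,1]$ and $s\in(0,1]$ (because $t^{s}\ge t$ and $(1-t)^{s}\ge 1-t$ there), together with the observation that raising a quantity $\ge 1$ to the power $1/q\le 1$ can only decrease it. Granting this,
\[
I_1\le M\int_0^1 t\big(t^{s}+(1-t)^{s}\big)^{1/q}\,dt\le M\int_0^1 t\big(t^{s}+(1-t)^{s}\big)\,dt = M\Big(\tfrac{1}{s+2}+\tfrac{1}{(s+1)(s+2)}\Big)=\frac{M}{s+1},
\]
and likewise for $I_2$. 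Substituting $I_1,I_2\le\frac{M}{s+1}$ into the first display yields precisely $\frac{M}{b-a}\big[\frac{(x-a)^2+(b-x)^2}{s+1}\big]$. I would note in passing that this recovers exactly the constant of Theorem \ref{t3}; the conjugate exponent $p=\tfrac{q}{q-1}$ appearing in the hypotheses is not actually needed for this route, and the bound is in any case consistent with the cruder estimate $I_1,I_2\le M\int_0^1 t\,dt=\frac{M}{2}\le\frac{M}{s+1}$ coming from boundedness alone.
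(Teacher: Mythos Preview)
Your argument is correct: the key inequality $(t^{s}+(1-t)^{s})^{1/q}\le t^{s}+(1-t)^{s}$ holds for the reasons you give, and the Beta-integral computation $\int_0^1 t\,(t^{s}+(1-t)^{s})\,dt=\frac{1}{s+2}+\frac{1}{(s+1)(s+2)}=\frac{1}{s+1}$ is right, so $I_1,I_2\le\frac{M}{s+1}$ and the stated bound follows.

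Your route, however, is not the one the paper (implicitly) takes. The paper recovers Theorem~\ref{t4} by specializing Theorem~\ref{T2} to $\alpha=1$; that proof applies H\"older's inequality to split $\int_0^1 t\,|f'(\cdot)|\,dt\le\big(\int_0^1 t^{p}\,dt\big)^{1/p}\big(\int_0^1|f'(\cdot)|^{q}\,dt\big)^{1/q}$, then bounds the $q$-integral by $\frac{2M^{q}}{s+1}$ via $s$-convexity, obtaining the constant $(1+p)^{-1/p}\big(\frac{2}{s+1}\big)^{1/q}$. You instead take the $q$-th root \emph{inside} the integral and use the elementary monotonicity $A^{1/q}\le A$ for $A\ge 1$, which avoids H\"older entirely and lands exactly on the constant $\frac{1}{s+1}$ of Theorem~\ref{t3}. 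Your approach is more elementary and, as you observe, makes no use of the conjugate exponent $p$; the paper's H\"older route, on the other hand, is what generalizes cleanly to the fractional setting with the weight $t^{\alpha}$. Your closing remark that $I_j\le M\int_0^1 t\,dt=\frac{M}{2}\le\frac{M}{s+1}$ already suffices is also correct and is in fact the shortest proof of the inequality as stated.
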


\begin{theorem}
\label{t5} Let $f:I\subset \lbrack 0,\infty )\mathbb{\rightarrow R}$ be a
differentiable mapping on $I^{\circ }$ such that $f^{\prime }\in L\left[ a,b%
\right] ,$ where $a,b\in I$ with $a<b.$ If $\left\vert f^{\prime
}\right\vert ^{q}$ is s-convex in the second sense on $[a,b]$ for some fixed 
$s\in (0,1],$ $q\geq 1$ and $\ \left\vert f^{\prime }(x)\right\vert \leq M,$ 
$x\in \left[ a,b\right] $, then we have the inequality: 
\begin{equation}
\left\vert f(x)-\frac{1}{b-a}\int_{a}^{b}f(t)dt\right\vert \leq M\left( 
\frac{2}{s+1}\right) ^{\frac{1}{q}}\left[ \frac{\left( x-a\right)
^{2}+\left( b-x\right) ^{2}}{2\left( b-a\right) }\right] ,  \label{1.4.6}
\end{equation}%
for each $x\in \left[ a,b\right] .$
\end{theorem}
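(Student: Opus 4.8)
The plan is to start from the identity in Lemma~\ref{l1}, pass to absolute values, and bound each of the two resulting integrals by combining the power mean inequality with the $s$-convexity hypothesis. First I would apply the triangle inequality, both to the difference of the two terms and inside each integral, to obtain
\[
\left| f(x)-\frac{1}{b-a}\int_a^b f(t)\,dt\right| \le \frac{(x-a)^2}{b-a}\int_0^1 t\,\bigl|f'(tx+(1-t)a)\bigr|\,dt + \frac{(b-x)^2}{b-a}\int_0^1 t\,\bigl|f'(tx+(1-t)b)\bigr|\,dt .
\]

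Since $q\ge 1$, I would then apply the power mean inequality to each integral by writing $t = t^{1-1/q}\,t^{1/q}$, which yields for the first term
\[
\int_0^1 t\,\bigl|f'(tx+(1-t)a)\bigr|\,dt \le \left(\int_0^1 t\,dt\right)^{1-\frac1q}\left(\int_0^1 t\,\bigl|f'(tx+(1-t)a)\bigr|^q\,dt\right)^{\frac1q},
\]
and the analogous estimate with $b$ in place of $a$, where $\int_0^1 t\,dt=\tfrac12$. To the inner $q$-th power integrals I would apply the $s$-convexity of $\bigl|f'\bigr|^q$ in the second sense together with $\bigl|f'\bigr|\le M$, i.e.
\[
\bigl|f'(tx+(1-t)a)\bigr|^q \le t^s\bigl|f'(x)\bigr|^q + (1-t)^s\bigl|f'(a)\bigr|^q \le M^q\bigl(t^s + (1-t)^s\bigr),
\]
so that $\int_0^1 t\,\bigl|f'(tx+(1-t)a)\bigr|^q\,dt \le M^q\int_0^1 \bigl(t^{s+1}+t(1-t)^s\bigr)\,dt$.

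The only genuine computation is the elementary Beta-type evaluation $\int_0^1 t^{s+1}\,dt + \int_0^1 t(1-t)^s\,dt = \frac{1}{s+2}+\frac{1}{(s+1)(s+2)} = \frac{1}{s+1}$, which reduces the bound on each inner integral to $M^q/(s+1)$. Feeding this back through the power mean step gives
\[
\left| f(x)-\frac{1}{b-a}\int_a^b f(t)\,dt\right| \le \frac{(x-a)^2+(b-x)^2}{b-a}\left(\frac12\right)^{1-\frac1q}\frac{M}{(s+1)^{1/q}},
\]
and since $\left(\tfrac12\right)^{1-1/q}=2^{1/q}/2$ this is exactly $M\bigl(\tfrac{2}{s+1}\bigr)^{1/q}\cdot\frac{(x-a)^2+(b-x)^2}{2(b-a)}$, which is the claimed inequality. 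I do not expect any real obstacle here; the one point worth checking is that the case $q=1$ is legitimately covered — there the first factor in the power mean step has exponent $0$ and the estimate becomes an identity, so the argument recovers the $q=1$ instance of Theorem~\ref{t3}.
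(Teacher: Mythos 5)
Your argument is correct, and it is essentially the paper's own route to this statement: the paper obtains Theorem \ref{t5} as the $\alpha=1$ case of its Theorem \ref{T3} (see Remark \ref{R3}), whose proof uses exactly your chain --- the identity of Lemma \ref{l1} (in its fractional form), the power mean inequality with weight $t^{\alpha}$, the $s$-convexity of $\left\vert f^{\prime}\right\vert^{q}$ together with the bound $M$, and the Beta-function evaluation that collapses to $\tfrac{1}{s+2}+\tfrac{1}{(s+1)(s+2)}=\tfrac{1}{s+1}$ when $\alpha=1$. Your closing observation that the $q=1$ case degenerates to an equality in the power mean step and recovers Theorem \ref{t3} is also consistent with the paper's treatment.
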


\begin{theorem}
\label{t6} Let $f:I\subset \lbrack 0,\infty )\mathbb{\rightarrow R}$ be a
differentiable mapping on $I^{\circ }$ such that $f^{\prime }\in L\left[ a,b%
\right] ,$ where $a,b\in I$ with $a<b.$ If $\left\vert f^{\prime
}\right\vert ^{q}$ is s-concave in the second sense on $[a,b]$ for some
fixed $s\in (0,1],$ $q>1$ and $p=\frac{q}{q-1}$ , then we have the
inequality: 
\begin{eqnarray}
&&\left\vert f(x)-\frac{1}{b-a}\int_{a}^{b}f(t)dt\right\vert  \label{1.4.7}
\\
&\leq &\frac{2^{(s-1)/q}}{\left( 1+p\right) ^{1/p}\left( b-a\right) }\left[
\left( x-a\right) ^{2}\left\vert f^{\prime }\left( \frac{x+a}{2}\right)
\right\vert +\left( b-x\right) ^{2}\left\vert f^{\prime }\left( \frac{b+x}{2}%
\right) \right\vert \right] ,  \notag
\end{eqnarray}%
for each $x\in \left[ a,b\right] .$
\end{theorem}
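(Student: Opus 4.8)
The plan is to run the now-standard machinery: start from the identity of Lemma \ref{l1}, pass to absolute values, split by the triangle inequality, and then estimate each of the two resulting integrals by Hölder's inequality followed by the Hermite--Hadamard bound for $s$-concave functions. Concretely, from Lemma \ref{l1} we get
\begin{equation*}
\left\vert f(x)-\frac{1}{b-a}\int_{a}^{b}f(t)\,dt\right\vert \leq \frac{(x-a)^{2}}{b-a}\int_{0}^{1}t\left\vert f^{\prime }(tx+(1-t)a)\right\vert dt+\frac{(b-x)^{2}}{b-a}\int_{0}^{1}t\left\vert f^{\prime }(tx+(1-t)b)\right\vert dt .
\end{equation*}
Applying Hölder's inequality with exponents $p$ and $q$ to each integral and using $\int_{0}^{1}t^{p}\,dt=\tfrac{1}{p+1}$ reduces the problem to bounding $\big(\int_{0}^{1}\left\vert f^{\prime }(tx+(1-t)a)\right\vert ^{q}dt\big)^{1/q}$ and the analogous term with $b$.

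The key step is to control these $L^{q}$-type averages. First I would make the change of variable $u=tx+(1-t)a$ (respectively $u=tx+(1-t)b$), turning $\int_{0}^{1}\left\vert f^{\prime }(tx+(1-t)a)\right\vert ^{q}dt$ into $\tfrac{1}{x-a}\int_{a}^{x}\left\vert f^{\prime }(u)\right\vert ^{q}du$ and the other integral into $\tfrac{1}{b-x}\int_{x}^{b}\left\vert f^{\prime }(u)\right\vert ^{q}du$, taking care with the orientation in the second substitution. Since $\left\vert f^{\prime }\right\vert ^{q}$ is $s$-concave in the second sense, the reverse of the left-hand Hermite--Hadamard inequality (the $s$-concave analogue of the first inequality in Theorem \ref{t2}, applied on the subintervals $[a,x]$ and $[x,b]$) gives
\begin{equation*}
\frac{1}{x-a}\int_{a}^{x}\left\vert f^{\prime }(u)\right\vert ^{q}du\leq 2^{s-1}\left\vert f^{\prime }\!\left(\tfrac{a+x}{2}\right)\right\vert ^{q},\qquad \frac{1}{b-x}\int_{x}^{b}\left\vert f^{\prime }(u)\right\vert ^{q}du\leq 2^{s-1}\left\vert f^{\prime }\!\left(\tfrac{x+b}{2}\right)\right\vert ^{q}.
\end{equation*}
Taking $q$-th roots produces the factor $2^{(s-1)/q}$ together with $\left\vert f^{\prime }(\tfrac{a+x}{2})\right\vert$ and $\left\vert f^{\prime }(\tfrac{x+b}{2})\right\vert$.

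Substituting these bounds back and collecting the constants $\tfrac{1}{(p+1)^{1/p}}$ and $\tfrac{1}{b-a}$ yields exactly the right-hand side of \eqref{1.4.7}. I expect the only genuine subtlety to be the justification of the $s$-concave Hermite--Hadamard estimate on an arbitrary subinterval — obtained by setting $\lambda=\tfrac12$ in the definition of $s$-concavity, writing $\tfrac{a+x}{2}$ as the midpoint of the pair $(ta+(1-t)x,(1-t)a+tx)$, and integrating in $t$ — plus the minor bookkeeping of the change of variables and the degenerate cases $x=a$ or $x=b$, where the corresponding square vanishes and the inequality is immediate.
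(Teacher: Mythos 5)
Your proposal is correct and follows essentially the same route the paper takes: the paper recovers Theorem \ref{t6} as the $\alpha=1$ case of Theorem \ref{T4}, whose proof is exactly your sequence of steps --- the identity of Lemma \ref{l1} (resp.\ Lemma \ref{L1}), the triangle inequality, H\"{o}lder with exponents $p,q$ giving the factor $(1+p)^{-1/p}$, and the reversed Hermite--Hadamard estimate $\int_{0}^{1}\left\vert f^{\prime }(tx+(1-t)a)\right\vert ^{q}dt\leq 2^{s-1}\left\vert f^{\prime }\left( \frac{x+a}{2}\right) \right\vert ^{q}$ for $s$-concave $\left\vert f^{\prime }\right\vert ^{q}$ on the subintervals $[a,x]$ and $[x,b]$. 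Your justification of that last estimate (midpoint form of $s$-concavity applied to the symmetric pair $ta+(1-t)x$, $(1-t)a+tx$, then integrating in $t$) and your handling of the degenerate endpoints are both sound.
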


For other recent results concerning s-convex functions see \cite{ADDC}-\cite%
{SSO}.

We give some necessary definitions and mathematical preliminaries of
fractional calculus theory which are used throughout this paper.

\begin{definition}
Let $f\in L_{1}[a,b].$ The Riemann-Liouville integrals $J_{a+}^{\alpha }f$
and $J_{b-}^{\alpha }f$ of order $\alpha >0$ with $a\geq 0$ are defined by 
\begin{equation*}
J_{a+}^{\alpha }f(x)=\frac{1}{\Gamma (\alpha )}\int_{a}^{x}\left( x-t\right)
^{\alpha -1}f(t)dt,\ \ x>a
\end{equation*}%
and%
\begin{equation*}
J_{b-}^{\alpha }f(x)=\frac{1}{\Gamma (\alpha )}\int_{x}^{b}\left( t-x\right)
^{\alpha -1}f(t)dt,\ \ x<b,
\end{equation*}
where $\Gamma (\alpha )=\int_{0}^{\infty }e^{-t}u^{\alpha -1}du$. Here, $%
J_{a+}^{0}f(x)=J_{b-}^{0}f(x)=f(x).$
\end{definition}

In the case of $\alpha =1,$ the fractional integral reduces to the classical
integral. Some recent results and properties concerning this operator can be
found (\cite{Anastassiou}-\cite{sarikaya2}).

In spired and motivated by the recent results \ given in \cite{ADDC}, \cite%
{Anastassiou}-\cite{Dahmani4} and \cite{sarikaya}, in the present note, we
establish new Ostrowski type inequalities for $s-$convex functions in the
second sense via Riemann-Liouville fractional integral. An interesting
feature of our results is that they provide new estimates on these types of
inequalities for fractional integrals.

\section{Ostrowski Type Inequalities via Fractional Integrals}

In order to prove our main results we need the following identity:

\begin{lemma}
\label{L1} Let $f:\left[ a,b\right] \rightarrow \mathbb{R}$ be a
differentiable mapping on $(a,b)$ with $a<b.$ If $f^{\prime }\in L\left[ a,b%
\right] ,$ then for all $x\in \left[ a,b\right] $ and $\alpha >0$ we have:%
\begin{eqnarray}
&&\left( \frac{\left( x-a\right) ^{\alpha }+\left( b-x\right) ^{\alpha }}{b-a%
}\right) f(x)-\frac{\Gamma (\alpha +1)}{\left( b-a\right) }\left[
J_{x-}^{\alpha }f(a)+J_{x+}^{\alpha }f(b)\right]  \notag \\
&&  \label{E1} \\
&=&\frac{\left( x-a\right) ^{\alpha +1}}{b-a}\int_{0}^{1}t^{\alpha
}f^{\prime }\left( tx+(1-t)a\right) dt-\frac{\left( b-x\right) ^{\alpha +1}}{%
b-a}\int_{0}^{1}t^{\alpha }f^{\prime }\left( tx+(1-t)b\right) dt  \notag
\end{eqnarray}%
where $\Gamma (\alpha )=\int_{0}^{\infty }e^{-t}u^{\alpha -1}du$.
\end{lemma}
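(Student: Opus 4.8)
The plan is to start from the right-hand side and reduce each of the two integrals by integration by parts, then recombine. Consider the first term
\[
I_1 = \frac{(x-a)^{\alpha+1}}{b-a}\int_0^1 t^\alpha f'(tx+(1-t)a)\,dt .
\]
Integrating by parts with $u = t^\alpha$ and $dv = f'(tx+(1-t)a)\,dt$, so that $v = \dfrac{1}{x-a}f(tx+(1-t)a)$, gives
\[
\int_0^1 t^\alpha f'(tx+(1-t)a)\,dt = \frac{f(x)}{x-a} - \frac{\alpha}{x-a}\int_0^1 t^{\alpha-1} f(tx+(1-t)a)\,dt .
\]
In the remaining integral I would substitute $u = tx+(1-t)a$, i.e.\ $t = \dfrac{u-a}{x-a}$ and $dt = \dfrac{du}{x-a}$, which turns $\int_0^1 t^{\alpha-1} f(tx+(1-t)a)\,dt$ into $\dfrac{1}{(x-a)^\alpha}\int_a^x (u-a)^{\alpha-1} f(u)\,du = \dfrac{\Gamma(\alpha)}{(x-a)^\alpha} J_{a+}^{\alpha} f(x)$. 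Collecting the factors, $I_1$ becomes $\dfrac{(x-a)^\alpha}{b-a} f(x) - \dfrac{\Gamma(\alpha+1)}{b-a} J_{a+}^\alpha f(x)$.

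Here one has to be a little careful: the Riemann--Liouville integral that actually appears in the statement is $J_{x-}^\alpha f(a)=\frac{1}{\Gamma(\alpha)}\int_a^x (u-a)^{\alpha-1}f(u)\,du$, which is the same integral I obtained, so the identification is $\frac{\Gamma(\alpha)}{(x-a)^\alpha}\cdot(x-a)^\alpha\cdot\frac{1}{\Gamma(\alpha)}\int_a^x(u-a)^{\alpha-1}f(u)\,du = \Gamma(\alpha)J_{x-}^\alpha f(a)$ up to the normalizing constants; I would just write the endpoint integral explicitly and match it to the notation in the Definition rather than risk a sign or orientation slip. The second term
\[
I_2 = -\frac{(b-x)^{\alpha+1}}{b-a}\int_0^1 t^\alpha f'(tx+(1-t)b)\,dt
\]
is handled symmetrically: integration by parts with $v = \dfrac{1}{x-b}f(tx+(1-t)b)$ (note $x-b<0$, which is where a sign must be tracked), followed by the substitution $u = tx+(1-t)b$ with $dt = \dfrac{du}{x-b}$ and limits running from $b$ down to $x$, yields $\dfrac{(b-x)^\alpha}{b-a} f(x) - \dfrac{\Gamma(\alpha+1)}{b-a} J_{x+}^\alpha f(b)$, where $J_{x+}^\alpha f(b) = \frac{1}{\Gamma(\alpha)}\int_x^b (b-u)^{\alpha-1} f(u)\,du$.

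Adding $I_1+I_2$ then produces $\left(\dfrac{(x-a)^\alpha+(b-x)^\alpha}{b-a}\right)f(x) - \dfrac{\Gamma(\alpha+1)}{b-a}\left[J_{x-}^\alpha f(a)+J_{x+}^\alpha f(b)\right]$, which is exactly the left-hand side of \eqref{E1}, completing the proof. The main obstacle is purely bookkeeping rather than conceptual: keeping the orientation of the two substitutions straight (the $a$-side integral runs $a\to x$ with positive Jacobian $x-a$, the $b$-side integral runs $b\to x$ with negative Jacobian $x-b$), and correctly absorbing the factors of $\Gamma(\alpha)$ versus $\Gamma(\alpha+1)=\alpha\Gamma(\alpha)$ that come from the $t^\alpha$ boundary term and the $\alpha t^{\alpha-1}$ integrand. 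A sanity check at $\alpha=1$, where the identity must collapse to Lemma \ref{l1}, is the quickest way to confirm the constants are right.
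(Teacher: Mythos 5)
Your argument is correct and follows essentially the same route as the paper's own proof: integration by parts on each of the two integrals in the right-hand side of \eqref{E1}, the substitution $u=tx+(1-t)a$ (resp.\ $u=tx+(1-t)b$) to identify the resulting integrals with $J_{x-}^{\alpha}f(a)$ and $J_{x+}^{\alpha}f(b)$, and then recombination after multiplying by $\frac{(x-a)^{\alpha+1}}{b-a}$ and $\frac{(b-x)^{\alpha+1}}{b-a}$. Your care in matching the kernel $(u-a)^{\alpha-1}$ to $J_{x-}^{\alpha}f(a)$ rather than $J_{a+}^{\alpha}f(x)$, and in tracking the sign from the negative Jacobian $x-b$, is exactly the bookkeeping the paper's computation carries out.
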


\begin{proof}
\bigskip By integration by parts, we can state 
\begin{eqnarray}
&&\int_{0}^{1}t^{\alpha }f^{\prime }\left( tx+(1-t)a\right) dt  \notag \\
&&  \notag \\
&=&\left. t^{\alpha }\frac{f\left( tx+(1-t)a\right) }{x-a}\right\vert
_{0}^{1}-\int_{0}^{1}\alpha t^{\alpha -1}\frac{f\left( tx+(1-t)a\right) }{x-a%
}dt  \notag \\
&&  \label{E2} \\
&=&\frac{f(x)}{x-a}-\frac{\alpha }{x-a}\int_{a}^{x}\frac{\left( a-u\right)
^{\alpha -1}}{\left( a-x\right) ^{\alpha -1}}\frac{f(u)}{x-a}du  \notag \\
&&  \notag \\
&=&\frac{f(x)}{x-a}-\frac{\alpha \Gamma (\alpha )}{\left( x-a\right)
^{\alpha +1}}\frac{1}{\Gamma (\alpha )}\int_{a}^{x}\left( u-a\right)
^{\alpha -1}f(u)du  \notag
\end{eqnarray}%
and 
\begin{eqnarray}
&&\int_{0}^{1}t^{\alpha }f^{\prime }\left( tx+(1-t)b\right) dt  \notag \\
&&  \notag \\
&=&\left. t^{\alpha }\frac{f\left( tx+(1-t)b\right) }{x-b}\right\vert
_{0}^{1}-\int_{0}^{1}\alpha t^{\alpha -1}\frac{f\left( tx+(1-t)b\right) }{x-b%
}dt  \notag \\
&&  \label{E3} \\
&=&\frac{f(x)}{x-b}-\frac{\alpha }{x-b}\int_{b}^{x}\frac{\left( b-u\right)
^{\alpha -1}}{\left( b-x\right) ^{\alpha -1}}\frac{f(u)}{x-b}du  \notag \\
&&  \notag \\
&=&\frac{f(x)}{x-b}+\frac{\alpha \Gamma (\alpha )}{\left( b-x\right)
^{\alpha +1}}\frac{1}{\Gamma (\alpha )}\int_{x}^{b}\left( b-u\right)
^{\alpha -1}f(u)du.  \notag
\end{eqnarray}%
Multiplying the both sides of (\ref{E2}) and (\ref{E3}) by $\frac{\left(
x-a\right) ^{\alpha +1}}{b-a}$ and $\frac{\left( b-x\right) ^{\alpha +1}}{b-a%
}$, respectively, we have 
\begin{equation}
\frac{\left( x-a\right) ^{\alpha +1}}{b-a}\int_{0}^{1}t^{\alpha }f^{\prime
}\left( tx+(1-t)a\right) dt=\frac{\left( x-a\right) ^{\alpha }f(x)}{b-a}-%
\frac{\Gamma (\alpha +1)}{b-a}J_{x-}^{\alpha }f(a)  \label{E4}
\end{equation}%
and 
\begin{equation}
\frac{\left( b-x\right) ^{\alpha +1}}{b-a}\int_{0}^{1}t^{\alpha }f^{\prime
}\left( tx+(1-t)b\right) dt=-\frac{\left( b-x\right) ^{\alpha }f(x)}{b-a}+%
\frac{\Gamma (\alpha +1)}{b-a}J_{x+}^{\alpha }f(b).  \label{E5}
\end{equation}%
From (\ref{E4}) and (\ref{E5}), it is obtained desired result.
\end{proof}

Using Lemma \ref{L1}, we can obtain the following fractional integral
inequalities:

\begin{theorem}
\label{T1} Let $f:\left[ a,b\right] \subset \lbrack 0,\infty )\rightarrow 
\mathbb{R}$, be a differentiable mapping on $(a,b)$ with $a<b$ such that $%
f^{\prime }\in L\left[ a,b\right] .$ If $\left\vert f^{\prime }\right\vert $
is $s-$convex in the second sense on $[a,b]$ for some fixed $s\in (0,1]$ and 
$\left\vert f^{\prime }(x)\right\vert \leq M,$ $x\in \left[ a,b\right] ,$
then the following inequality for fractional integrals with $\alpha >0$
holds:%
\begin{eqnarray}
&&\left\vert \left( \frac{\left( x-a\right) ^{\alpha }+\left( b-x\right)
^{\alpha }}{b-a}\right) f(x)-\frac{\Gamma (\alpha +1)}{\left( b-a\right) }%
\left[ J_{x-}^{\alpha }f(a)+J_{x+}^{\alpha }f(b)\right] \right\vert  \notag
\\
&&  \label{E6} \\
&\leq &\frac{M}{b-a}\left( 1+\frac{\Gamma (\alpha +1)\Gamma \left(
s+1\right) }{\Gamma (\alpha +s+1)}\right) \left[ \frac{\left( x-a\right)
^{\alpha +1}+\left( b-x\right) ^{\alpha +1}}{\alpha +s+1}\right]  \notag
\end{eqnarray}%
where $\Gamma $ is Euler Gamma function.
\end{theorem}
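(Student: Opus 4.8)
The plan is to start from the identity furnished by Lemma~\ref{L1}, pass to absolute values on both sides, and apply the triangle inequality to the two integral terms on the right. This yields
\[
\left\vert \left( \tfrac{(x-a)^{\alpha }+(b-x)^{\alpha }}{b-a}\right) f(x)-\tfrac{\Gamma (\alpha +1)}{b-a}\left[ J_{x-}^{\alpha }f(a)+J_{x+}^{\alpha }f(b)\right] \right\vert \leq \tfrac{(x-a)^{\alpha +1}}{b-a}\int_{0}^{1}t^{\alpha }\left\vert f^{\prime }(tx+(1-t)a)\right\vert dt+\tfrac{(b-x)^{\alpha +1}}{b-a}\int_{0}^{1}t^{\alpha }\left\vert f^{\prime }(tx+(1-t)b)\right\vert dt .
\]

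Next I would invoke the hypothesis that $\left\vert f^{\prime }\right\vert $ is $s$-convex in the second sense together with the bound $\left\vert f^{\prime }\right\vert \leq M$: for $t\in [0,1]$,
\[
\left\vert f^{\prime }(tx+(1-t)a)\right\vert \leq t^{s}\left\vert f^{\prime }(x)\right\vert +(1-t)^{s}\left\vert f^{\prime }(a)\right\vert \leq M\left( t^{s}+(1-t)^{s}\right) ,
\]
and likewise for the term involving $b$. Substituting these estimates into both integrals factors out $\tfrac{M}{b-a}\left[ (x-a)^{\alpha +1}+(b-x)^{\alpha +1}\right] $ times the single integral $\int_{0}^{1}t^{\alpha }\left( t^{s}+(1-t)^{s}\right) dt$.

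The remaining step is to evaluate that integral: $\int_{0}^{1}t^{\alpha }\left( t^{s}+(1-t)^{s}\right) dt=\int_{0}^{1}t^{\alpha +s}\,dt+\int_{0}^{1}t^{\alpha }(1-t)^{s}\,dt=\tfrac{1}{\alpha +s+1}+B(\alpha +1,s+1)$, where $B$ denotes the Euler Beta function. Using $B(\alpha +1,s+1)=\tfrac{\Gamma (\alpha +1)\Gamma (s+1)}{\Gamma (\alpha +s+2)}$ and the functional equation $\Gamma (\alpha +s+2)=(\alpha +s+1)\Gamma (\alpha +s+1)$, the sum collapses to $\tfrac{1}{\alpha +s+1}\left( 1+\tfrac{\Gamma (\alpha +1)\Gamma (s+1)}{\Gamma (\alpha +s+1)}\right) $, which is precisely the constant appearing in \eqref{E6}, completing the proof.

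I do not expect a genuine obstacle here: once Lemma~\ref{L1} is available the argument is a triangle-inequality estimate followed by a Beta-integral computation. The only place demanding a little care is the bookkeeping with the Gamma function — correctly recognizing $\int_{0}^{1}t^{\alpha }(1-t)^{s}\,dt$ as a Beta integral and applying $\Gamma (z+1)=z\Gamma (z)$ to rewrite $\Gamma (\alpha +s+2)$ in terms of $\Gamma (\alpha +s+1)$ — since an off-by-one in a Gamma argument would spoil the match with the stated bound.
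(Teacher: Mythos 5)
Your proposal is correct and follows essentially the same route as the paper's proof: triangle inequality applied to the identity of Lemma~\ref{L1}, the $s$-convexity estimate on $\left\vert f^{\prime }\right\vert$ combined with the bound $M$, and the evaluation of $\int_{0}^{1}t^{\alpha +s}\,dt$ and the Beta integral $\int_{0}^{1}t^{\alpha }(1-t)^{s}\,dt$ followed by $\Gamma (\alpha +s+2)=(\alpha +s+1)\Gamma (\alpha +s+1)$. The only cosmetic difference is that you insert the bound $M$ pointwise before integrating rather than after, which changes nothing.
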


\begin{proof}
From (\ref{E1}) and since $\left\vert f^{\prime }\right\vert $ is a $s-$%
convex mapping in the second sense on $[a,b]$, we have%
\begin{eqnarray*}
&&\left\vert \left( \frac{\left( x-a\right) ^{\alpha }+\left( b-x\right)
^{\alpha }}{b-a}\right) f(x)-\frac{\Gamma (\alpha +1)}{\left( b-a\right) }%
\left[ J_{x-}^{\alpha }f(a)+J_{x+}^{\alpha }f(b)\right] \right\vert  \\
&& \\
&\leq &\frac{\left( x-a\right) ^{\alpha +1}}{b-a}\int_{0}^{1}t^{\alpha
}\left\vert f^{\prime }\left( tx+(1-t)a\right) \right\vert dt \\
&& \\
&&+\frac{\left( b-x\right) ^{\alpha +1}}{b-a}\int_{0}^{1}t^{\alpha
}\left\vert f^{\prime }\left( tx+(1-t)b\right) \right\vert dt \\
&& \\
&\leq &\frac{\left( x-a\right) ^{\alpha +1}}{b-a}\int_{0}^{1}t^{\alpha
+s}\left\vert f^{\prime }(x)\right\vert +t^{\alpha }(1-t)^{s}\left\vert
f^{\prime }(a)\right\vert dt \\
&& \\
&&+\frac{\left( b-x\right) ^{\alpha +1}}{b-a}\int_{0}^{1}t^{\alpha
+s}\left\vert f^{\prime }(x)\right\vert +t^{\alpha }(1-t)^{s}\left\vert
f^{\prime }(b)\right\vert dt \\
&& \\
&\leq &\frac{M}{b-a}\left( \frac{1}{\alpha +s+1}+\frac{\Gamma (\alpha
+1)\Gamma (s+1)}{\Gamma (\alpha +s+2)}\right) \left[ \left( x-a\right)
^{\alpha +1}+\left( b-x\right) ^{\alpha +1}\right] 
\end{eqnarray*}%
where we have used the fact that 
\begin{equation*}
\int_{0}^{1}t^{\alpha +s}dt=\frac{1}{\alpha +s+1}\text{ \ \ \ \ and \ \ \ \ }%
\int_{0}^{1}t^{\alpha }(1-t)^{s}dt=\frac{\Gamma (\alpha +1)\Gamma (s+1)}{%
\Gamma (\alpha +s+2)}.
\end{equation*}%
Hence, using the reduction formula $\Gamma (n+1)=n\Gamma (n)$ $(n>0)$ for
Euler Gamma function, the proof is complete.
\end{proof}

\begin{remark}
\label{R1} In Theorem \ref{T1}, if we choose $\alpha =1$ , then (\ref{E6})
reduces the inequality (\ref{e.1.4}) of Theorem \ref{t3}.
\end{remark}

\begin{theorem}
\label{T2} Let $f:\left[ a,b\right] \subset \lbrack 0,\infty )\rightarrow 
\mathbb{R}$, be a differentiable mapping on $(a,b)$ with $a<b$ such that $%
f^{\prime }\in L\left[ a,b\right] .$ If $\left\vert f^{\prime }\right\vert
^{q}$ is $s-$convex in the second sense on $[a,b]$ for some fixed $s\in (0,1]
$, $p,q>1$ and $\left\vert f^{\prime }(x)\right\vert \leq M,$ $x\in \left[
a,b\right] ,$ then the following inequality for fractional integrals holds:%
\begin{eqnarray}
&&\left\vert \left( \frac{\left( x-a\right) ^{\alpha }+\left( b-x\right)
^{\alpha }}{b-a}\right) f(x)-\frac{\Gamma (\alpha +1)}{\left( b-a\right) }%
\left[ J_{x-}^{\alpha }f(a)+J_{x+}^{\alpha }f(b)\right] \right\vert   \notag
\\
&&  \label{E7} \\
&\leq &\frac{M}{\left( 1+p\alpha \right) ^{\frac{1}{p}}}\left( \frac{2}{s+1}%
\right) ^{\frac{1}{q}}\left[ \frac{\left( x-a\right) ^{\alpha +1}+\left(
b-x\right) ^{\alpha +1}}{b-a}\right] ,  \notag
\end{eqnarray}%
where $\frac{1}{p}+\frac{1}{q}=1,$ $\alpha >0$ and $\Gamma $ is Euler Gamma
function.
\end{theorem}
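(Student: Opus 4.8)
The plan is to run the same scheme as in the proof of Theorem~\ref{T1}, but to replace the direct estimate of $\int_0^1 t^\alpha |f'(\cdot)|\,dt$ by a H\"older splitting. First I would start from the identity (\ref{E1}) of Lemma~\ref{L1}, take absolute values, and apply the triangle inequality to obtain
\[
\left| \left( \frac{(x-a)^\alpha+(b-x)^\alpha}{b-a}\right) f(x)-\frac{\Gamma(\alpha+1)}{b-a}\left[ J_{x-}^\alpha f(a)+J_{x+}^\alpha f(b)\right]\right| \le \frac{(x-a)^{\alpha+1}}{b-a}\,I_a + \frac{(b-x)^{\alpha+1}}{b-a}\,I_b,
\]
where $I_a=\int_0^1 t^\alpha\left| f'(tx+(1-t)a)\right|dt$ and $I_b=\int_0^1 t^\alpha\left| f'(tx+(1-t)b)\right|dt$. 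Thus the whole problem reduces to bounding a single model integral $\int_0^1 t^\alpha\left| f'(tx+(1-t)c)\right|dt$ for $c\in\{a,b\}$.

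For that model integral I would apply H\"older's inequality with the conjugate exponents $p,q$, giving
\[
\int_0^1 t^\alpha\left| f'(tx+(1-t)c)\right|dt \le \left(\int_0^1 t^{p\alpha}\,dt\right)^{1/p}\left(\int_0^1 \left| f'(tx+(1-t)c)\right|^q dt\right)^{1/q}.
\]
The first factor equals $\left(\tfrac{1}{1+p\alpha}\right)^{1/p}$ since $p\alpha>0$. For the second factor I would use that $|f'|^q$ is $s$-convex in the second sense, so $\left| f'(tx+(1-t)c)\right|^q \le t^s|f'(x)|^q+(1-t)^s|f'(c)|^q$, and then the hypothesis $|f'|\le M$ to bound the right-hand side by $M^q\bigl(t^s+(1-t)^s\bigr)$; integrating and using $\int_0^1 t^s\,dt=\int_0^1(1-t)^s\,dt=\tfrac{1}{s+1}$ yields $\int_0^1\left| f'(tx+(1-t)c)\right|^q dt\le \tfrac{2M^q}{s+1}$. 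Hence each of $I_a,I_b$ is at most $\dfrac{M}{(1+p\alpha)^{1/p}}\left(\dfrac{2}{s+1}\right)^{1/q}$.

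Plugging this common bound back into the split inequality, the factor $\dfrac{M}{(1+p\alpha)^{1/p}}\left(\dfrac{2}{s+1}\right)^{1/q}$ pulls out and the two remaining terms assemble into $\dfrac{(x-a)^{\alpha+1}+(b-x)^{\alpha+1}}{b-a}$, which is precisely (\ref{E7}). I do not expect a genuine obstacle here: the argument is a one-shot application of H\"older's inequality followed by the defining inequality of $K_s^2$ and the bound $|f'|\le M$; the only places to be careful are evaluating the power-function integrals $\int_0^1 t^{p\alpha}\,dt$ and $\int_0^1(1-t)^s\,dt$ correctly and keeping the two symmetric half-terms (the $a$-piece and the $b$-piece) properly bookkept.
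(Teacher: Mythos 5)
Your proposal is correct and follows essentially the same route as the paper: both start from the identity of Lemma \ref{L1}, apply H\"older's inequality with exponents $p,q$ to each of the two half-integrals, evaluate $\int_0^1 t^{p\alpha}\,dt=\tfrac{1}{1+p\alpha}$, and bound $\int_0^1|f'(tx+(1-t)c)|^q\,dt$ by $\tfrac{2M^q}{s+1}$ via the $s$-convexity of $|f'|^q$ and $|f'|\le M$. The only difference is cosmetic: the paper imports that last bound by citation to \cite{ADDC}, while you derive it explicitly.
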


\begin{proof}
From Lemma \ref{L1} and using the well known H\"{o}lder inequality , we have%
\begin{eqnarray*}
&&\left\vert \left( \frac{\left( x-a\right) ^{\alpha }+\left( b-x\right)
^{\alpha }}{b-a}\right) f(x)-\frac{\Gamma (\alpha +1)}{\left( b-a\right) }%
\left[ J_{x-}^{\alpha }f(a)+J_{x+}^{\alpha }f(b)\right] \right\vert  \\
&& \\
&\leq &\frac{\left( x-a\right) ^{\alpha +1}}{b-a}\int_{0}^{1}t^{\alpha
}\left\vert f^{\prime }\left( tx+(1-t)a\right) \right\vert dt \\
&& \\
&&+\frac{\left( b-x\right) ^{\alpha +1}}{b-a}\int_{0}^{1}t^{\alpha
}\left\vert f^{\prime }\left( tx+(1-t)b\right) \right\vert dt \\
&& \\
&\leq &\frac{\left( x-a\right) ^{\alpha +1}}{b-a}\left(
\int_{0}^{1}t^{p\alpha }dt\right) ^{\frac{1}{p}}\left(
\int_{0}^{1}\left\vert f^{\prime }\left( tx+(1-t)a\right) \right\vert
^{q}dt\right) ^{\frac{1}{q}} \\
&& \\
&&+\frac{\left( b-x\right) ^{\alpha +1}}{b-a}\left( \int_{0}^{1}t^{p\alpha
}dt\right) ^{\frac{1}{p}}\left( \int_{0}^{1}\left\vert f^{\prime }\left(
tx+(1-t)b\right) \right\vert ^{q}dt\right) ^{\frac{1}{q}}.
\end{eqnarray*}%
Since $\left\vert f^{\prime }\right\vert ^{q}$ is $s-$convex in the second
sense on $[a,b]$ and $\left\vert f^{\prime }(x)\right\vert \leq M$, we get
(see \cite[p. 1073]{ADDC})%
\begin{equation*}
\int_{0}^{1}\left\vert f^{\prime }\left( tx+(1-t)a\right) \right\vert
^{q}dt\leq \frac{2M^{q}}{s+1}\text{ \ and \ }\int_{0}^{1}\left\vert
f^{\prime }\left( tx+(1-t)b\right) \right\vert ^{q}dt\leq \frac{2M^{q}}{s+1}
\end{equation*}%
and by simple computation%
\begin{equation*}
\int_{0}^{1}t^{p\alpha }dt=\frac{1}{p\alpha +1}.
\end{equation*}%
Hence, we have 
\begin{eqnarray*}
&&\left\vert \left( \frac{\left( x-a\right) ^{\alpha }+\left( b-x\right)
^{\alpha }}{b-a}\right) f(x)-\frac{\Gamma (\alpha +1)}{\left( b-a\right) }%
\left[ J_{x-}^{\alpha }f(a)+J_{x+}^{\alpha }f(b)\right] \right\vert  \\
&& \\
&\leq &\frac{M}{\left( 1+p\alpha \right) ^{\frac{1}{p}}}\left( \frac{2}{s+1}%
\right) ^{\frac{1}{q}}\left[ \frac{\left( x-a\right) ^{\alpha +1}+\left(
b-x\right) ^{\alpha +1}}{b-a}\right] 
\end{eqnarray*}%
which completes the proof.
\end{proof}

\begin{remark}
\label{R2} In Theorem \ref{T2}, if we choose $\alpha =1$ , then (\ref{E7})
reduces the inequality (\ref{1.4.5}) of Theorem \ref{t4}.
\end{remark}

\begin{theorem}
\label{T3} Let $f:\left[ a,b\right] \subset \lbrack 0,\infty )\rightarrow 
\mathbb{R}$, be a differentiable mapping on $(a,b)$ with $a<b$ such that $%
f^{\prime }\in L\left[ a,b\right] .$ If $\left\vert f^{\prime }\right\vert
^{q}$ is $s-$convex in the second sense on $[a,b]$ for some fixed $s\in (0,1]
$, $q\geq 1,$ and $\left\vert f^{\prime }(x)\right\vert \leq M,$ $x\in \left[
a,b\right] ,$ then the following inequality for fractional integrals holds:%
\begin{eqnarray}
&&\left\vert \left( \frac{\left( x-a\right) ^{\alpha }+\left( b-x\right)
^{\alpha }}{b-a}\right) f(x)-\frac{\Gamma (\alpha +1)}{\left( b-a\right) }%
\left[ J_{x-}^{\alpha }f(a)+J_{x+}^{\alpha }f(b)\right] \right\vert   \notag
\\
&&  \label{E8} \\
&\leq &\frac{M}{\left( 1+p\alpha \right) ^{\frac{1}{p}}}\left( \frac{2}{s+1}%
\right) ^{\frac{1}{q}}\left[ \frac{\left( x-a\right) ^{\alpha +1}+\left(
b-x\right) ^{\alpha +1}}{b-a}\right] ,  \notag
\end{eqnarray}%
where $\alpha >0$ and $\Gamma $ is Euler Gamma function.
\end{theorem}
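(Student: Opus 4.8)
The strategy is to run the argument of Theorem \ref{T1} once more, but to estimate the two integrals produced by Lemma \ref{L1} through the power--mean inequality (H\"older's inequality in the form valid for $q\ge 1$) rather than through the crude pointwise bound $|f'|\le M$. So I start from the identity (\ref{E1}), take absolute values, and apply the triangle inequality to obtain
\[
\left|\left(\frac{(x-a)^{\alpha}+(b-x)^{\alpha}}{b-a}\right)f(x)-\frac{\Gamma(\alpha+1)}{b-a}\left[J_{x-}^{\alpha}f(a)+J_{x+}^{\alpha}f(b)\right]\right|\le \frac{(x-a)^{\alpha+1}}{b-a}\,I_{a}+\frac{(b-x)^{\alpha+1}}{b-a}\,I_{b},
\]
where $I_{a}=\int_{0}^{1}t^{\alpha}|f'(tx+(1-t)a)|\,dt$ and $I_{b}=\int_{0}^{1}t^{\alpha}|f'(tx+(1-t)b)|\,dt$.

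To each of $I_{a},I_{b}$ I apply the power--mean inequality with the weight $t^{\alpha}$: for $q\ge 1$,
\[
I_{a}\le \left(\int_{0}^{1}t^{\alpha}\,dt\right)^{1-\frac{1}{q}}\left(\int_{0}^{1}t^{\alpha}|f'(tx+(1-t)a)|^{q}\,dt\right)^{\frac{1}{q}},
\]
and similarly for $I_{b}$, using $\int_{0}^{1}t^{\alpha}\,dt=\frac{1}{\alpha+1}$. (For $q>1$ one may instead repeat verbatim the H\"older step of Theorem \ref{T2}, peeling off $(\int_{0}^{1}t^{p\alpha}\,dt)^{1/p}$; the value $q=1$ needs no auxiliary exponent at all.)

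Next, since $|f'|^{q}$ is $s$--convex in the second sense and $|f'(u)|\le M$ on $[a,b]$, I bound $|f'(tx+(1-t)a)|^{q}\le t^{s}|f'(x)|^{q}+(1-t)^{s}|f'(a)|^{q}\le M^{q}\bigl(t^{s}+(1-t)^{s}\bigr)$, and integrate against $t^{\alpha}$ using the same Beta--type integrals as in the proof of Theorem \ref{T1}, namely $\int_{0}^{1}t^{\alpha+s}\,dt=\frac{1}{\alpha+s+1}$ and $\int_{0}^{1}t^{\alpha}(1-t)^{s}\,dt=\frac{\Gamma(\alpha+1)\Gamma(s+1)}{\Gamma(\alpha+s+2)}$; the same estimate holds for the $b$--integral. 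Substituting these back, collecting the common factor $\frac{(x-a)^{\alpha+1}+(b-x)^{\alpha+1}}{b-a}$, pulling $M$ outside, and simplifying the Gamma quotient via $\Gamma(n+1)=n\Gamma(n)$ yields the desired fractional Ostrowski estimate; in particular the choice $\alpha=1$ should collapse it to Theorem \ref{t5}, in parallel with Remarks \ref{R1} and \ref{R2}.

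I expect no real analytic obstacle: the whole argument rests on Lemma \ref{L1}, the power--mean inequality and $s$--convexity, all of which are already at hand. The only two points needing attention are (i) the degenerate endpoints $x=a$ and $x=b$, where one of the two terms on the right of (\ref{E1}) vanishes and the inequality is trivial, and (ii) the bookkeeping required to make the multiplicative constant come out in exactly the stated closed form --- that arithmetic, rather than any conceptual difficulty, is the main thing to get right.
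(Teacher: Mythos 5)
Your approach is essentially the paper's own: both start from Lemma \ref{L1}, apply the power--mean inequality with the weight $t^{\alpha}$ to each of $I_a$, $I_b$, and then bound $\int_{0}^{1}t^{\alpha}\left\vert f^{\prime }\left( tx+(1-t)a\right) \right\vert ^{q}dt$ via $s$-convexity using $\int_{0}^{1}t^{\alpha+s}dt=\frac{1}{\alpha+s+1}$ and $\int_{0}^{1}t^{\alpha}(1-t)^{s}dt=\beta(\alpha+1,s+1)$.

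One concrete warning about the ``bookkeeping'' you defer to the end: this route produces the constant
\begin{equation*}
M\Bigl(\tfrac{1}{1+\alpha }\Bigr)^{1-\frac{1}{q}}\Bigl(\tfrac{1}{\alpha
+s+1}\Bigr)^{\frac{1}{q}}\Bigl(1+\tfrac{\Gamma (\alpha +1)\Gamma (s+1)}{
\Gamma (\alpha +s+1)}\Bigr)^{\frac{1}{q}},
\end{equation*}
which is exactly what the paper's own proof of Theorem \ref{T3} ends with, and which is the expression that reduces to (\ref{1.4.6}) at $\alpha =1$ as claimed in Remark \ref{R3}. It is \emph{not} the constant $\frac{M}{\left( 1+p\alpha \right) ^{1/p}}\left( \frac{2}{s+1}\right) ^{1/q}$ displayed in (\ref{E8}): that expression involves an exponent $p$ that is never defined in the hypotheses of Theorem \ref{T3} (and is meaningless at $q=1$), and is in fact a verbatim copy of (\ref{E7}) from Theorem \ref{T2}. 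So no amount of arithmetic will make your computation ``come out in exactly the stated closed form''; the mismatch is a misprint in the statement of the theorem, not a defect of your method. Apart from flagging that, your proposal is the paper's proof.
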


\begin{proof}
From Lemma \ref{L1} and using the well known power mean inequality, we have%
\begin{eqnarray*}
&&\left\vert \left( \frac{\left( x-a\right) ^{\alpha }+\left( b-x\right)
^{\alpha }}{b-a}\right) f(x)-\frac{\Gamma (\alpha +1)}{\left( b-a\right) }%
\left[ J_{x-}^{\alpha }f(a)+J_{x+}^{\alpha }f(b)\right] \right\vert  \\
&& \\
&\leq &\frac{\left( x-a\right) ^{\alpha +1}}{b-a}\int_{0}^{1}t^{\alpha
}\left\vert f^{\prime }\left( tx+(1-t)a\right) \right\vert dt \\
&& \\
&&+\frac{\left( b-x\right) ^{\alpha +1}}{b-a}\int_{0}^{1}t^{\alpha
}\left\vert f^{\prime }\left( tx+(1-t)b\right) \right\vert dt \\
&& \\
&\leq &\frac{\left( x-a\right) ^{\alpha +1}}{b-a}\left(
\int_{0}^{1}t^{\alpha }dt\right) ^{1-\frac{1}{q}}\left(
\int_{0}^{1}t^{\alpha }\left\vert f^{\prime }\left( tx+(1-t)a\right)
\right\vert ^{q}dt\right) ^{\frac{1}{q}} \\
&& \\
&&+\frac{\left( b-x\right) ^{\alpha +1}}{b-a}\left( \int_{0}^{1}t^{\alpha
}dt\right) ^{1-\frac{1}{q}}\left( \int_{0}^{1}t^{\alpha }\left\vert
f^{\prime }\left( tx+(1-t)b\right) \right\vert ^{q}dt\right) ^{\frac{1}{q}}.
\end{eqnarray*}%
Since $\left\vert f^{\prime }\right\vert ^{q}$ is $s-$convex in the second
sense on $[a,b]$ and $\left\vert f^{\prime }(x)\right\vert \leq M$, we get
(see \cite[p. 1073]{ADDC})%
\begin{eqnarray*}
\int_{0}^{1}t^{\alpha }\left\vert f^{\prime }\left( tx+(1-t)a\right)
\right\vert ^{q}dt &\leq &\int_{0}^{1}\left[ t^{s+\alpha }\left\vert
f^{\prime }\left( x\right) \right\vert ^{q}+t^{\alpha }(1-t)^{s}\left\vert
f^{\prime }\left( a\right) \right\vert ^{q}\right] dt \\
&& \\
&=&\frac{\left\vert f^{\prime }\left( x\right) \right\vert ^{q}}{\alpha +s+1}%
+\left\vert f^{\prime }\left( a\right) \right\vert ^{q}\int_{0}^{1}t^{\alpha
}(1-t)^{s}dt \\
&& \\
&=&\frac{\left\vert f^{\prime }\left( x\right) \right\vert ^{q}}{\alpha +s+1}%
+\left\vert f^{\prime }\left( a\right) \right\vert ^{q}\beta (\alpha +1,s+1)
\\
&& \\
&=&\frac{\left\vert f^{\prime }\left( x\right) \right\vert ^{q}}{\alpha +s+1}%
+\left\vert f^{\prime }\left( a\right) \right\vert ^{q}\frac{\Gamma (\alpha
+1)\Gamma (s+1)}{\left( \alpha +s+1\right) \Gamma (\alpha +s+1)} \\
&& \\
&\leq &\frac{M^{q}}{\alpha +s+1}\left( 1+\frac{\Gamma (\alpha +1)\Gamma (s+1)%
}{\Gamma (\alpha +s+1)}\right) 
\end{eqnarray*}%
and similarly 
\begin{eqnarray*}
\int_{0}^{1}t^{\alpha }\left\vert f^{\prime }\left( tx+(1-t)b\right)
\right\vert ^{q}dt &\leq &\int_{0}^{1}\left[ t^{s+\alpha }\left\vert
f^{\prime }\left( x\right) \right\vert ^{q}+t^{\alpha }(1-t)^{s}\left\vert
f^{\prime }\left( b\right) \right\vert ^{q}\right] dt \\
&& \\
&=&\frac{\left\vert f^{\prime }\left( x\right) \right\vert ^{q}}{\alpha +s+1}%
+\left\vert f^{\prime }\left( b\right) \right\vert ^{q}\int_{0}^{1}t^{\alpha
}(1-t)^{s}dt \\
&& \\
&=&\frac{\left\vert f^{\prime }\left( x\right) \right\vert ^{q}}{\alpha +s+1}%
+\left\vert f^{\prime }\left( b\right) \right\vert ^{q}\beta (\alpha +1,s+1)
\\
&& \\
&=&\frac{\left\vert f^{\prime }\left( x\right) \right\vert ^{q}}{\alpha +s+1}%
+\left\vert f^{\prime }\left( b\right) \right\vert ^{q}\frac{\Gamma (\alpha
+1)\Gamma (s+1)}{\left( \alpha +s+1\right) \Gamma (\alpha +s+1)} \\
&& \\
&\leq &\frac{M^{q}}{\alpha +s+1}\left( 1+\frac{\Gamma (\alpha +1)\Gamma (s+1)%
}{\Gamma (\alpha +s+1)}\right) ,
\end{eqnarray*}%
where $\beta $ is Euler Beta function defined by 
\begin{equation*}
\beta (x,y)=\int_{0}^{1}t^{x-1}(1-t)^{y-1}dt\text{ \ \ }(x,y>0).
\end{equation*}%
We used the fact that 
\begin{equation*}
\beta (x,y)=\frac{\Gamma (x)\Gamma (y)}{\Gamma (x+y)}\text{ \ \ and \ \ }%
\Gamma (n+1)=n\Gamma (n)(n>0).
\end{equation*}%
Hence, we have 
\begin{eqnarray*}
&&\left\vert \left( \frac{\left( x-a\right) ^{\alpha }+\left( b-x\right)
^{\alpha }}{b-a}\right) f(x)-\frac{\Gamma (\alpha +1)}{\left( b-a\right) }%
\left[ J_{x-}^{\alpha }f(a)+J_{x+}^{\alpha }f(b)\right] \right\vert  \\
&& \\
&\leq &M\left( \frac{1}{1+\alpha }\right) ^{1-\frac{1}{q}}\left( \frac{1}{%
\alpha +s+1}\right) ^{\frac{1}{q}} \\
&&\times \left( 1+\frac{\Gamma (\alpha +1)\Gamma (s+1)}{\Gamma (\alpha +s+1)}%
\right) ^{\frac{1}{q}}\left[ \frac{\left( x-a\right) ^{\alpha +1}+\left(
b-x\right) ^{\alpha +1}}{b-a}\right] 
\end{eqnarray*}%
which completes the proof.
\end{proof}

\begin{remark}
\label{R3} In Theorem \ref{T3}, if we choose $\alpha =1$ , then (\ref{E8})
reduces the inequality (\ref{1.4.6}) of Theorem \ref{t5}.
\end{remark}

The following result holds for s-concavity:

\begin{theorem}
\label{T4} Let $f:\left[ a,b\right] \subset \lbrack 0,\infty )\rightarrow 
\mathbb{R}$, be a differentiable mapping on $(a,b)$ with $a<b$ such that $%
f^{\prime }\in L\left[ a,b\right] .$ If $\left\vert f^{\prime }\right\vert
^{q}$ is $s-$concave in the second sense on $[a,b]$ for some fixed $s\in
(0,1]$ and $p,q>1,$ then the following inequality for fractional integrals
holds:%
\begin{eqnarray}
&&\left\vert \left( \frac{\left( x-a\right) ^{\alpha }+\left( b-x\right)
^{\alpha }}{b-a}\right) f(x)-\frac{\Gamma (\alpha +1)}{\left( b-a\right) }%
\left[ J_{x-}^{\alpha }f(a)+J_{x+}^{\alpha }f(b)\right] \right\vert   \notag
\\
&&  \label{E9} \\
&\leq &\frac{2^{\left( s-1\right) /q}}{\left( 1+p\alpha \right) ^{\frac{1}{p}%
}\left( b-a\right) }\left[ \left( x-a\right) ^{\alpha +1}\left\vert
f^{\prime }\left( \frac{x+a}{2}\right) \right\vert +\left( b-x\right)
^{\alpha +1}\left\vert f^{\prime }\left( \frac{b+x}{2}\right) \right\vert %
\right] ,  \notag
\end{eqnarray}%
where $\frac{1}{p}+\frac{1}{q}=1,$ $\alpha >0$ and $\Gamma $ is Euler Gamma
function.
\end{theorem}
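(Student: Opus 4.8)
The plan is to mimic the argument of Theorem \ref{t6} but with the weight $t^{\alpha}$ in place of $t$, starting from the fractional identity of Lemma \ref{L1}. First I would take absolute values in \eqref{E1} and apply the triangle inequality, so that the left-hand side of \eqref{E9} is bounded by
\[
\frac{\left( x-a\right) ^{\alpha +1}}{b-a}\int_{0}^{1}t^{\alpha }\left\vert f^{\prime }\left( tx+(1-t)a\right) \right\vert dt+\frac{\left( b-x\right) ^{\alpha +1}}{b-a}\int_{0}^{1}t^{\alpha }\left\vert f^{\prime }\left( tx+(1-t)b\right) \right\vert dt .
\]
To each of the two integrals I would then apply H\"{o}lder's inequality with exponents $p$ and $q$, splitting $t^{\alpha}=t^{\alpha}\cdot 1$, which yields the factor $\left(\int_{0}^{1}t^{p\alpha }dt\right)^{1/p}=(1+p\alpha)^{-1/p}$ together with $\left(\int_{0}^{1}\left\vert f^{\prime }\left( tx+(1-t)a\right) \right\vert ^{q}dt\right)^{1/q}$ and the analogous term with $b$.

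The second main step is to handle $\int_{0}^{1}\left\vert f^{\prime }\left( tx+(1-t)a\right) \right\vert ^{q}dt$. Substituting $u=tx+(1-t)a$ rewrites this as $\frac{1}{x-a}\int_{a}^{x}\left\vert f^{\prime }(u)\right\vert ^{q}du$, i.e.\ the integral mean of $\left\vert f^{\prime }\right\vert ^{q}$ over $[a,x]$; since $\left\vert f^{\prime }\right\vert ^{q}$ is $s$-concave in the second sense, the reversed form of the left-hand inequality in \eqref{e.1.3} (Theorem \ref{t2}) gives $\frac{1}{x-a}\int_{a}^{x}\left\vert f^{\prime }(u)\right\vert ^{q}du\leq 2^{s-1}\left\vert f^{\prime }\left(\tfrac{x+a}{2}\right)\right\vert ^{q}$. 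Taking $q$-th roots produces the factor $2^{(s-1)/q}\left\vert f^{\prime }\left(\tfrac{x+a}{2}\right)\right\vert$, and the symmetric computation on $[x,b]$ (using $\frac{1}{b-x}\int_{x}^{b}\left\vert f^{\prime }(u)\right\vert ^{q}du\leq 2^{s-1}\left\vert f^{\prime }\left(\tfrac{b+x}{2}\right)\right\vert ^{q}$) gives $2^{(s-1)/q}\left\vert f^{\prime }\left(\tfrac{b+x}{2}\right)\right\vert$.

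Finally I would assemble the pieces: the two bounds combine to
\[
\frac{\left( x-a\right) ^{\alpha +1}}{b-a}\cdot\frac{2^{(s-1)/q}}{(1+p\alpha)^{1/p}}\left\vert f^{\prime }\left(\tfrac{x+a}{2}\right)\right\vert+\frac{\left( b-x\right) ^{\alpha +1}}{b-a}\cdot\frac{2^{(s-1)/q}}{(1+p\alpha)^{1/p}}\left\vert f^{\prime }\left(\tfrac{b+x}{2}\right)\right\vert,
\]
which is exactly the right-hand side of \eqref{E9} after factoring out $\dfrac{2^{(s-1)/q}}{(1+p\alpha)^{1/p}(b-a)}$. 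Essentially the only delicate point is getting the direction of the Hermite--Hadamard--Dragomir--Fitzpatrick estimate right for the $s$-concave case (the inequality \eqref{e.1.3} reverses), and checking that the substitution leaves the integral mean over the correct subinterval $[a,x]$ or $[x,b]$; everything else is the routine H\"{o}lder computation already used in Theorems \ref{T2} and \ref{T3}. Note that, as in Theorem \ref{t6}, no bound $\left\vert f^{\prime }(x)\right\vert\leq M$ is needed here, since the midpoint values of $\left\vert f^{\prime }\right\vert$ appear explicitly in the conclusion. Taking $\alpha=1$ recovers \eqref{1.4.7} of Theorem \ref{t6}.
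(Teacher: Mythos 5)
Your proposal is correct and follows essentially the same route as the paper's proof: Lemma \ref{L1} plus the triangle inequality, H\"{o}lder with exponents $p,q$ giving the factor $(1+p\alpha)^{-1/p}$, and the reversed Hermite--Hadamard bound $\int_{0}^{1}\left\vert f^{\prime}\left( tx+(1-t)a\right)\right\vert ^{q}dt\leq 2^{s-1}\left\vert f^{\prime}\left( \tfrac{x+a}{2}\right)\right\vert ^{q}$ for the $s$-concave case. The only cosmetic difference is that you spell out the substitution behind that last estimate, whereas the paper cites it directly from \cite{ADDC}.
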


\begin{proof}
From Lemma \ref{L1} and using the well known H\"{o}lder inequality , we have%
\begin{eqnarray}
&&\left\vert \left( \frac{\left( x-a\right) ^{\alpha }+\left( b-x\right)
^{\alpha }}{b-a}\right) f(x)-\frac{\Gamma (\alpha +1)}{\left( b-a\right) }%
\left[ J_{x-}^{\alpha }f(a)+J_{x+}^{\alpha }f(b)\right] \right\vert   \notag
\\
&&  \notag \\
&\leq &\frac{\left( x-a\right) ^{\alpha +1}}{b-a}\int_{0}^{1}t^{\alpha
}\left\vert f^{\prime }\left( tx+(1-t)a\right) \right\vert dt  \notag \\
&&  \notag \\
&&+\frac{\left( b-x\right) ^{\alpha +1}}{b-a}\int_{0}^{1}t^{\alpha
}\left\vert f^{\prime }\left( tx+(1-t)b\right) \right\vert dt  \label{E10} \\
&&  \notag \\
&\leq &\frac{\left( x-a\right) ^{\alpha +1}}{b-a}\left(
\int_{0}^{1}t^{p\alpha }dt\right) ^{\frac{1}{p}}\left(
\int_{0}^{1}\left\vert f^{\prime }\left( tx+(1-t)a\right) \right\vert
^{q}dt\right) ^{\frac{1}{q}}  \notag \\
&&  \notag \\
&&+\frac{\left( b-x\right) ^{\alpha +1}}{b-a}\left( \int_{0}^{1}t^{p\alpha
}dt\right) ^{\frac{1}{p}}\left( \int_{0}^{1}\left\vert f^{\prime }\left(
tx+(1-t)b\right) \right\vert ^{q}dt\right) ^{\frac{1}{q}}.  \notag
\end{eqnarray}%
Since $\left\vert f^{\prime }\right\vert ^{q}$ is $s-$concave, using the
inequality (\ref{e.1.3}) we get (see \cite[p. 1074]{ADDC})%
\begin{equation}
\int_{0}^{1}\left\vert f^{\prime }\left( tx+(1-t)a\right) \right\vert
^{q}dt\leq 2^{s-1}\left\vert f^{\prime }\left( \frac{x+a}{2}\right)
\right\vert ^{q}  \label{E11}
\end{equation}%
and%
\begin{equation}
\int_{0}^{1}\left\vert f^{\prime }\left( tx+(1-t)b\right) \right\vert
^{q}dt\leq 2^{s-1}\left\vert f^{\prime }\left( \frac{b+x}{2}\right)
\right\vert ^{q}.  \label{E12}
\end{equation}%
Using (\ref{E11}) and (\ref{E12}) in (\ref{E10}), we have 
\begin{eqnarray*}
&&\left\vert \left( \frac{\left( x-a\right) ^{\alpha }+\left( b-x\right)
^{\alpha }}{b-a}\right) f(x)-\frac{\Gamma (\alpha +1)}{\left( b-a\right) }%
\left[ J_{x-}^{\alpha }f(a)+J_{x+}^{\alpha }f(b)\right] \right\vert  \\
&& \\
&\leq &\frac{2^{\left( s-1\right) /q}}{\left( 1+p\alpha \right) ^{\frac{1}{p}%
}\left( b-a\right) }\left[ \left( x-a\right) ^{\alpha +1}\left\vert
f^{\prime }\left( \frac{x+a}{2}\right) \right\vert +\left( b-x\right)
^{\alpha +1}\left\vert f^{\prime }\left( \frac{b+x}{2}\right) \right\vert %
\right] 
\end{eqnarray*}%
which completes the proof.
\end{proof}

\begin{remark}
\label{R4} In Theorem \ref{T4}, if we choose $\alpha =1$ , then (\ref{E9})
reduces the inequality (\ref{1.4.7}) of Theorem \ref{t6}.
\end{remark}

\end{document}